\DeclareSymbolFont{AMSb}{U}{msb}{m}{n}
\DeclareSymbolFontAlphabet{\Bb}{AMSb}
\DeclareMathOperator*{\ch}{ch}
\DeclareMathOperator*{\wt}{wt}
\DeclareMathOperator*{\prt}{part}
\newtheorem{theorem}{Theorem}[section]
\newtheorem{lemma}[theorem]{Lemma}
\newtheorem{example}[theorem]{Example}
\newtheorem{remark}[theorem]{Remark}
\numberwithin{equation}{section}
\author{Mirko Primc}
\address[M. Primc]{Faculty of Science,  University of Zagreb,  Zagreb, Croatia}
\email{primc@math.hr}
\author{Goran Trup\v cevi\' c}
\address[G. Trup\v cevi\' c]{Faculty of Teacher Education,  University of Zagreb,  Zagreb, Croatia}
\email{goran.trupcevic@ufzg.hr}
\begin{document}
\title{Linear independence for $C_\ell^{(1)}$ by using $C_{2\ell}^{(1)}$}

\begin{abstract}
In this note we prove linear independence of the combinatorial spanning set for standard $C_\ell^{(1)}$-module $L(k\Lambda_0)$ by establishing a connection with the combinatorial basis of Feigin-Stoyanovsky's type subspace $W(k\Lambda_0)$ of $C_{2\ell}^{(1)}$-module $L(k\Lambda_0)$. It should be noted that the proof of linear independence for the basis of $W(k\Lambda_0)$ is obtained by using simple currents and intertwining operators in the vertex operator algebra $L(k\Lambda_0)$.
\end{abstract}
\maketitle
 
 %%%%%%%%%%%%%%%%%%%%%%%%%%%%%%%%%%%%%%
\section{Introduction}

Let ${\mathfrak g}$ be a simple Lie algebra of type $C_\ell$, $\ell\geq2$, and
$$
\hat{\mathfrak g} =\coprod_{j\in\mathbb Z}{\mathfrak g}\otimes
t^{j}+\mathbb C c
$$
the associated affine Lie algebra of type $C_\ell^{(1)}$. For a basis $B_\ell$ of ${\mathfrak g}$ (see Subsection 2.4) we have the basis $\bar B_\ell\cup\{c\}$ of $\hat{\mathfrak g}$, where
$$
\bar B_\ell=\{x(n)\mid x\in B_\ell, n\in\mathbb Z\}
$$
and, as usual, $b(n)=b\otimes t^{n}$. In \cite{PS2} it is conjectured that the spanning set consisting of monomial vectors
\begin{equation}\label{E: u(pi)}
u(\pi)v_{k\Lambda_0}=\left(\prod_{b\in B_\ell, n<0}b(n)^{m_{b(n)}}\right)v_{k\Lambda_0},
\end{equation}
where the exponents $\pi=(m_{b(n)}\mid b\in B_\ell, n<0)$ are $L_{C_\ell^{(1)}}(k\Lambda_0)$-admissible, is a basis of the vacuum standard $\hat{\mathfrak g}$-module $L_{C_\ell^{(1)}}(k\Lambda_0)$ of the given level $k$, with a highest weight vector $v_{k\Lambda_0}$.

In this note we prove this conjecture by using the construction \cite{BPT} of a basis of Feigin-Stoyanovsky's subspace $W_{C_\ell^{(1)}}(k\Lambda_0)\subset L_{C_\ell^{(1)}}(k\Lambda_0)$, consisting, for certain $B_\ell^1\subset B_\ell$, of monomial vectors
\begin{equation*}%\label{E: w(pi)}
w(\pi)v_{k\Lambda_0}=\left(\prod_{b\in B_\ell^1, n<0}b(n)^{m_{b(n)}}\right)v_{k\Lambda_0},
\end{equation*}
where the exponents $\pi=(m_{b(n)}\mid b\in B_\ell^1, n<0)$ are $W_{C_\ell^{(1)}}(k\Lambda_0)$-admissible. As noted in \cite{P3}, there is a bijection
\begin{equation*}%\label{E: bijection Bell B2ell 1}
B_\ell\longleftrightarrow B_{2\ell}^1
\end{equation*}
such that the corresponding combinatorial conditions of $L_{C_\ell^{(1)}}(k\Lambda_0)$-admissible $\pi$ and $W_{C_{2\ell}^{(1)}}(k\Lambda_0)$-admissible $\pi$ coincide
(so from here on we use only the term $(k\Lambda_0)$-admissible).

 So the key idea for the proof of linear independence of the set of monomial vectors (\ref{E: u(pi)}) is to embed Lie algebra ${\mathfrak g}$ of type $C_\ell$ into ${\mathfrak g}'$ of type $C_{2\ell}$ together with its standard module
$$
L_{C_{\ell}^{(1)}}(k\Lambda_0)\subset L_{C_{2\ell}^{(1)}}(k\Lambda_0)
\supset W_{C_{2\ell}^{(1)}}(k\Lambda_0),
$$
and then, by using inner derivations $T_{\underline 1},\dots, T_{\underline \ell}$ of ${\mathfrak g}'$, connect the sets $B_\ell$ and $B_{2\ell}^1$ and monomials $u(\pi)$ and $w(\pi)$ in the universal enveloping algebra $U(\widehat{\mathfrak g'})$:
$$
T_{\underline 1}^{m_{\underline 1}(\pi)}\dots, T_{\underline \ell}^{m_{\underline \ell}(\pi)}\colon u(\pi)\to w(\pi).
$$

For $k=1$ linear independence of the set of monomial vectors (\ref{E: u(pi)}) is proved in \cite{PS1}, \cite{DK} and \cite{R}.

It should be noted that the proof of linear independence for the basis of $W(k\Lambda_0)$ in \cite{BPT} is obtained by using simple currents and intertwining operators for the vertex operator algebra $L_{C_{\ell}^{(1)}}(k\Lambda_0)$. This method was used in \cite{CLM1} and \cite{CLM2} to find Rogers-Ramanujan and Rogers-Selberg recursions for the characters of Feigin-Stoyanovsky's subspaces of standard $A_1^{(1)}$-modules, and it seems that this approach is very well-suited for inductive arguments and recursions (cf. \cite{B, J1, J2, P1, P2, T1, T2}).
However, it is not clear whether this approach can yield a proof of linear independence of bases for all $L_{C_{\ell}^{(1)}}(\Lambda)$ which are implicitly conjectured in \cite{CMPP}. J. Dousse and I. Konan proved in \cite{DK} unspecialized character formula for all level $1$ standard modules $L_{C_{\ell}^{(1)}}(\Lambda)$ by using crystal base theory, and M. Russel proved in \cite{R} Andrews-Bressoud type identities by solving recursions which involved all level $1$ principally specialized characters of $L_{C_{\ell}^{(1)}}(\Lambda)$, but not only them(!?). Groebner basis approach is used in \cite{PS1} in $k=1$ case, but for higer levels this most general approach gets to be very difficult (cf. \cite{PS3}).

In this note we are interested in vacuum standard modules for affine Lie algebra of type  $C_\ell^{(1)}$, $\ell\geq 2$, but most of our considerations hold as well for $\ell=1$ when $C_1^{(1)}\cong A_1^{(1)}$. In particular, we have Theorem \ref{T:theorem} for $l=1$, which was proved by different methods in \cite{MP} and \cite{FKLMM}. Our proof is, in a way, inspired by E. Feigin's degeneration procedure in \cite{F}.

%%%%%%%%%%%%%%%%%%%%%%%%%%%%%%
\section{Standard modules for affine Lie algebras}

\subsection{Affine Lie algebras}
Let ${\mathfrak g}$ be a simple complex Lie algebra, $\mathfrak h$
a Cartan subalgebra, $R$ the corresponding root system and $\theta$  the maximal root with respect to some fixed basis of the root system. For each root $\alpha$ we fix a root vector $x_\alpha$ in $\mathfrak g$. Via a symmetric invariant bilinear form $\langle \ , \ \rangle$ on ${\mathfrak g}$ we identify $\mathfrak h$ and $\mathfrak h^*$ and we assume
that $\langle \theta , \theta \rangle=2$.  Set
$$
\hat{\mathfrak g} =\coprod_{j\in\mathbb Z}{\mathfrak g}\otimes
t^{j}+\mathbb C c, \qquad \tilde{\mathfrak g}=\hat{\mathfrak
g}+\mathbb C d.
$$
Then $\tilde{\mathfrak g}$ is the associated untwisted affine
Kac-Moody Lie algebra (cf. \cite{K}) with the commutator
$$
[x(i),y(j)]=[x,y](i+j)+i\delta_{i+j,0}\langle x,y\rangle c.
$$
Here, as usual, $x(i)=x\otimes t^{i}$ for $x\in{\mathfrak g}$ and
$i\in\mathbb Z$, $c$ is the canonical central element, and
$[d,x(i)]=ix(i)$.  We identify ${\mathfrak g}$ and
${\mathfrak g}\otimes 1$. In the later sections, we will often use the notation $x_n$ for $x(n)$.
Let $B$ be a basis of $\mathfrak{g}$ consisting of root vectors and elements of $\mathfrak{h}$. Set
$$
\bar B=\{x(n)\mid x\in B, n\in\mathbb Z\}.
$$
Then $\bar B\cup\{c\}$ is a basis of $\hat{\mathfrak{g}}$.
A given linear order $\preceq$ on $B$ we extend  to a linear order on $\bar B$ by
\begin{equation}\label{E:order on bar B}
x(n)\prec y(m)\quad\text{iff}\quad n<m \ \text{or} \  n=m \ \text{and} \   x\prec y.
\end{equation}

%%%%%%%%%%%%%%%%%%%%%%%%%%%%%%%%%%
\subsection{$\mathbb Z$-gradings of $\mathfrak g$ by miniscule coweights}
Let $R\sp\vee$ be the dual root system of $R$ and fix a minuscule coweight $\omega \in P(R^\vee)$. Set $\Gamma = \{\,\alpha \in R \mid \omega(\alpha\,) = 1\}$ and
$$
{\mathfrak g}_0 = {\mathfrak h} + \sum_{\omega(\alpha)=0}\,
{\mathfrak g}_\alpha, 
\quad  {\mathfrak g}_{\pm1} = \sum_{\alpha \in \pm \Gamma}\, {\mathfrak
g}_\alpha,\quad
 \hat{\mathfrak{g}}_0=\mathfrak{g}_0 \otimes \mathbb{C}[t,t^{-1}]\oplus \mathbb{C} c, \quad\hat{\mathfrak{g}}_{\pm1}=\mathfrak{g}_{\pm1}\otimes\mathbb{C}[t,t^{-1}].
$$
Then on $\mathfrak g$ and $\hat{\mathfrak{g}}$ we have $\mathbb{Z}$-gradings
$$
\mathfrak{g} = \mathfrak{g}_{-1} + \mathfrak{g}_0 + \mathfrak{g}_1\quad\text{and}\quad \hat{\mathfrak{g}} = \hat{\mathfrak{g}}_{-1} + \hat{\mathfrak{g}}_0 + \hat{\mathfrak{g}}_1 .
$$ 
Let ${\mathfrak h}_0'\subset{\mathfrak h}$ be a Cartan subalgebra of ${\mathfrak g}_0'=[{\mathfrak g}_0,{\mathfrak g}_0]$. Note that  ${\mathfrak g}_0 ={\mathfrak g}_0' +\mathbb C\omega$ is a reductive Lie algebra,  ${\mathfrak g}_{\pm1}$ are commutative subalgebras of $\mathfrak g$ and $\mathfrak g _0$-modules, and that
$\hat{\mathfrak{g}}_{\pm1}$ are commutative subalgebras of $\hat{\mathfrak{g}}$ and  $\hat{\mathfrak{g}}_0$-modules.

%%%%%%%%%%%%%%%%%%%%%%%%%%%%%
\subsection{Standard modules and Feigin-Stoyanovsky's type subspaces}
As usual, we denote by $\Lambda_0, \dots, \Lambda_{\ell}$ the fundamental weights of $\tilde{\mathfrak{g}}$. For a given integral domi\-nant weight $\Lambda = k_0\Lambda_0 + \cdots + k_{\ell} \Lambda_{\ell}$ we denote by $L(\Lambda)$ or $L_{\tilde{\mathfrak{g}}}(\Lambda)$ the standard (i.e. integrable highest weight) $\tilde{\mathfrak{g}}$-module with the highest weight $\Lambda$, by $v_{\Lambda}$ a fixed highest weight vector and by  $k=\Lambda(c)$ the level of the module (cf. \cite{K}). In this note we shall be interested mostly in the vacuum standard modules $L(k\Lambda_0)$.
\smallskip

For a standard $\tilde{\mathfrak{g}}$-module $L(\Lambda)$ and a given $\mathbb{Z}$-grading on $\hat{\mathfrak{g}}$ as above, we define the corresponding Feigin-Stoyanovsky's type subspace $W(\Lambda)$ or $W_{\tilde{\mathfrak{g}}}(\Lambda)$ as 
$$
 W(\Lambda) = U(\hat{\mathfrak{g}}_1)  v_{\Lambda}\subset
L(\Lambda).
 $$
Note that $U(\hat{\mathfrak{g}}_1) \cong S(\hat{\mathfrak{g}}_1) $ since $\hat{\mathfrak{g}}_1$ is commutative.

%%%%%%%%%%%%%%%%%%%%%%%
\subsection{A basis of  $\hat{\mathfrak g}$ of the type $C_\ell^{(1)}$}
Let ${\mathfrak g}$ be a simple Lie algebra  of the type $C_\ell$ and
 \ $\mathfrak g=\mathfrak n_-+{\mathfrak h}+{\mathfrak n_+}$ \  a Cartan decomposition. We identify the root system
 $$
 R=\{\pm\epsilon_i\pm\epsilon_j\mid i,j=1,\dots,\ell\}\backslash\{0\},
 $$
 where $\{\epsilon_i\mid i=1,\dots,\ell\}$ is the canonical basis in $\mathbb R^\ell$. For $1\leq a, b\leq \ell$ fix root vectors in $\mathfrak g$ denoted as
\begin{itemize}
\item $ab$  \quad for the root $\epsilon_a+\epsilon_b$, \quad $a\leq b$,
\item $\underline{a}\underline{b}$  \quad for the root $-\epsilon_a-\epsilon_b$,  \quad  $a\geq b$,
\item ${a}\underline{b}$  \quad for the root $\epsilon_a-\epsilon_b$,  \quad  $a\neq b$,
\item ${a}\underline{a}=h_a$  \quad (a simple coroot in $\mathfrak h$).
\end{itemize}
These vectors form a weight basis $B_\ell=B$ of ${\mathfrak g}$. Set 
\begin{equation}\label{E:ordered indices} 1\succ\dots\succ\ell\succ\underline{\ell}\succ\dots\succ\underline{1}.
\end{equation}
We write  $B_\ell$ in a triangular scheme with the element
$ab\in B_\ell$ in $a$-th column and $b$-th row. For $\ell=3$ we have $B_3$

	\begin{center}
		\begin{tikzpicture}[scale=.5]
			\begin{scope}				
				\foreach \x in {1,...,3}{
						\fill[black!10] ({\x-.5},{\x-.5}) rectangle ({\x+.5},{\x+.5});
				}
				\foreach \y in {1,...,3}{
					\foreach \x in {\y,...,3}{
						\node at ({\y},{7-\x}){\y\x} ;
					}
					\foreach \x in {3,...,1}{
						\node at ({\y},{\x}){\y \underline{\x}} ;
					}
				}
				\foreach \y in {3,...,1}{
					\foreach \x in {\y,...,1}{
						\node at ({7-\y},{\x}){\underline{\y}\underline{\x}} ;
					}
				}
			\end{scope}			
		\end{tikzpicture}
	\end{center}
On the picture, the shaded part corresponds to the Cartan subalgebra $\mathfrak h$, the upper-left triangle to positive root vectors, ${\mathfrak n}_+$, and the lower-right triangle to negative root vectors, ${\mathfrak n}_-$. Note that the hypotenuse corresponds to the long roots and that $11$ is the root vector for the highest root $\theta=2\epsilon_1$ of $\mathfrak{g}$.

We write $\bar{B}_\ell=\bar{B}$ for the affine Lie algebra $\hat{\mathfrak g}$ of the type $C_\ell^{(1)}$. In order to describe the combinatorial monomial basis of $L(k\Lambda_0)$ we need a subset $\bar{B}_\ell^{<0}\subset\bar{B}$,
$$
\bar{B}_\ell^{<0}=
{B}_\ell\otimes t^{-1} \ {\cup} \ {{B}_\ell\otimes t^{-2}} \  {\cup}  \  {B}_\ell\otimes t^{-3} \ {\cup} \dots ,
$$
written as the array composed of a sequence of triangles as in \cite{T3}. For $\ell=2$ we have the array $\bar{B}_2^{<0}$

	\begin{center}

\begin{tikzpicture}[xscale=.7, yscale =.5]
		\begin{scope}				
			\foreach \y in {1,...,2}{
				\foreach \x in {\y,...,2}{
					\node at ({\x},{5-\y}){\y\x$\scriptstyle _{-1}$} ;
				}
				\foreach \x in {2,...,1}{
					\node at ({5-\x},{5-\y}){\y \underline{\x}$\scriptstyle _{-1}$} ;
				}
			}
			\foreach \y in {1,...,2}{
				\foreach \x in {\y,...,1}{
					\node at ({5-\x},{\y}){\underline{\y}\underline{\x}$\scriptstyle _{-1}$} ;
				}
			}
		\end{scope}			
		\begin{scope} [xshift=4cm]				
				\foreach \x in {1,...,4}{
					\foreach \y in {\x,...,4}{
						\fill[black!10] ({\x-.5},{\y-\x+.5}) rectangle ({\x+.5},{\y-\x+1.5});
					}
			}
			\foreach \y in {1,...,2}{
				\foreach \x in {\y,...,2}{
					\node at ({\y},{5-\x}){\y\x$\scriptstyle _{-2}$} ;
				}
				\foreach \x in {2,...,1}{
					\node at ({\y},{\x}){\y \underline{\x}$\scriptstyle _{-2}$} ;
				}
			}
			\foreach \y in {2,...,1}{
				\foreach \x in {\y,...,1}{
					\node at ({5-\y},{\x}){\underline{\y}\underline{\x}$\scriptstyle _{-2}$} ;
				}
			}
		\end{scope}			
		\fill[black!10] (8.5,.5) -- ++(1,0)--++(0,-1) -- ++(1,0)--++(0,-1) -- ++ (-2,0) -- cycle;
		\node at (5,0){11$\scriptstyle _{-3}$} ;
		\node at (6,0){12$\scriptstyle _{-3}$} ;
		\node at (7,0){1\underline{2}$\scriptstyle _{-3}$} ;
		\node at (8,0){1\underline{1}$\scriptstyle _{-3}$} ;
		\node at (9,0){11$\scriptstyle _{-4}$} ;
		\node at (6,-1){$\ddots$} ;
		\node at (7,-1){$\ddots$} ;
		\node at (8,-1){$\ddots$} ;
		\node at (9,-1){$\ddots$} ;
		\node at (10,-1){$\ddots$} ;
	\end{tikzpicture}
\end{center}
where $ab_{-n}=ab(-n)=ab\otimes t^{-n}$. We shall write this array of root vectors $\bar{B}_\ell^{<0}$ rotated for $\pi/4$, denoting only the position $\circ$ of an element $x\in \bar{B}_\ell^{<0}$:

	\begin{center}
	\begin{tikzpicture}[xscale=.7, yscale =.7, rotate=45]
		\begin{scope}				
			\foreach \y in {1,...,3}{
				\foreach \x in {1,...,5}{
					\node at ({\x+\y-1},{1-\y}){$\circ$};
				}
			}
				\foreach \x in {1,...,4}{
	\node at ({\x+4-1},{1-4}){$\circ$};
}
				\foreach \x in {1,...,2}{
	\node at ({\x+5-1},{1-5}){$\circ$};
}

\node at (7,-4) {$\cdots$};
			\draw[dotted] 	(6,-1) -- ++(-1,0) -- ++(0,-1) -- ++(-2,0);
		\end{scope}			
	\end{tikzpicture}
\end{center}
Of course, it is easy to see that the second element in the top row is $x=22_{-2}$. A downward path $\mathcal Z$ in the array $\bar{B}_\ell^{<0}$ is a sequence (or a subset) with an element in the top row followed by an adjacent element in the second row, and so on all the way to an element in the bottom row. An example of a downward path is denoted with the dashed line.

For $\ell>2$ the array $\bar{B}_\ell^{<0}$ is composed of a sequence of triangles so that for triangles ${B}_\ell\otimes t^{-n}$ and ${B}_\ell\otimes t^{-n-1}$ the adjacent sides are
$$
({1}\underline{1}_{-n}, \dots, {\ell}\underline{1}_{-n}, \underline{\ell}\underline{1}_{-n}, \dots, \underline{1}\underline{1}_{-n})\quad\text{and}\quad
({1}{1}_{-n-1}, \dots, 1{\ell}_{-n-1}, 1\underline{\ell}_{-n-1}, \dots,{1}\underline{1}_{-n-1})
$$
and the weight difference between the corresponding points is given by 
$$
\text{wt\,}({1a}_{-n-1})=-\alpha_0+\text{wt\,}({a}\underline{1}_{-n}),
$$
where $\alpha_0=-\theta+\delta$ is the simple root and $\delta$ is the imaginary root for the affine Lie algebra $\tilde{\mathfrak g}$. The array of root vectors $\bar{B}_\ell^{<0}$ rotated for $\pi/4$ has $2\ell+1$ rows and through each point in the first row there is $2^{2\ell}$ downward paths.

%%%%%%%%%%%%%%%%%%%%%%%%%%%%
\subsection{Combinatorial spanning set of $C_\ell^{(1)}$-modules $L(k\Lambda_0)$} By using (\ref{E:ordered indices}) we define a lexicographical order $\preceq$ on $B_\ell$ and we extend it to $\bar B_\ell$ by (\ref{E:order on bar B}).
A monomial
\begin{equation}\label{E:colored partition pi}
\pi=\prod_{b(j)\in \bar{B}_\ell^{<0}}b(j)\sp{m_{b(j)}}\in S(\tilde{\mathfrak{g}})
\end{equation}
can be interpreted as a colored partition $\pi$: for $m_{b(j)}>0$ we say that $b(j)$ is a part of degree $|b(j)|=j$ and color $b\in B_\ell$ which appears in the partition $m_{b(j)}$ times. We define the degree and length of a colored partition $\pi$ as
$$
|\pi| = \sum_{b(j)\in \bar{B}_\ell^{<0}}j\cdot{m_{b(j)}},\quad 
\ell(\pi)=\sum_{b(j)\in \bar{B}_\ell^{<0}}{m_{b(j)}}.
$$
For a positive integer $k$ we say that a monomial (colored partition) $\pi$ is $k\Lambda_0$-admissible if for every downward path $\mathcal Z$ in the array $\bar{B}_\ell^{<0}$ we have
\begin{equation*}%\label{E:k difference conditions}
\sum_{b(j)\in \mathcal Z}{m_{b(j)}}\leq k.
\end{equation*}
For a monomial $\pi$ of length $s$,
$$
\pi=x_1 x_2 \dots x_s,\quad x_1\preceq x_2 \preceq \dots \preceq x_s,
$$
we define the corresponding monomial in the enveloping algebra
\begin{equation}\label{E:monomial u(pi)}
u(\pi)=x_1 x_2 \dots x_s\in U(\tilde{\mathfrak g})
\end{equation}
and the coresponding monomial vector in the standard module
\begin{equation}\label{E:monomial vectors}
u(\pi) v_{k\Lambda_0}\in L(k\Lambda_0).
\end{equation}
It is proved in \cite{PS2} that the set of monomial vectors $u(\pi) v_{k\Lambda_0}$ with $k\Lambda_0$-admissible $\pi$ is a spanning set of $L(k\Lambda_0)$. Moreover, in \cite{PS1} it is proved that this is a basis for $k=1$ and it is conjectured that this spanning set is a basis for all integer $k>1$. The main result of this paper is

\begin{theorem}\label{T:theorem}
For a positive integer $k$ the set of monomial vectors $u(\pi) v_{k\Lambda_0}$ with $k\Lambda_0$-admissible $\pi$ is a basis of $L_{C_{\ell}^{(1)}}(k\Lambda_0)$.
\end{theorem}

\begin{remark}\label{R: change of basis}
{\em	Let
	\begin{equation}\label{E:definition of part}
		\prt_{\ell, k}=
		\sum_{\pi\text{ is $k\Lambda_0$-admissible}}e^{\wt(\pi)}
	\end{equation}
	be the generating function for the number of $k\Lambda_0$-admissible partitions/monomials with parts in the array of root vectors $\bar B_\ell^{<0}$. Since the set of monomial vectors (\ref{E:monomial vectors}) with $k\Lambda_0$-admissible $\pi$ is a spanning set of $L_{C_{\ell}^{(1)}}(k\Lambda_0)$, we have
	\begin{equation}\label{E:spanning inequality}
		\prt_{\ell, k}\geq e^{-k\Lambda_0}\ch L_{C_{\ell}^{(1)}}(k\Lambda_0)
	\end{equation}
	(the inequalities are true in each weight component) and Theorem \ref{T:theorem} holds if and only if the equality holds, i.e., if
	\begin{equation}\label{E:lin. indep. inequality}
		\prt_{\ell, k}\leq e^{-k\Lambda_0}\ch L_{C_{\ell}^{(1)}}(k\Lambda_0).
	\end{equation}
	
	To prove inequality (\ref{E:lin. indep. inequality}) we ``slightly'' change the basis $B_\ell$ of $\mathfrak g$  by setting
	\begin{equation}\label{E:change of basis B}
		{1}\underline{1}=\epsilon_1, {2}\underline{2}=\epsilon_2, \dots, {\ell}\underline{\ell}=\epsilon_\ell
	\end{equation}
	and repeat the construction of monomial vectors $u(\pi)v_{k\Lambda_0}$ with $k\Lambda_0$-admissible $\pi$ with parts in the new array of root vectors (which we denote again as $\bar B_\ell^{<0}$). These new $k\Lambda_0$-admissible partitions/monomials have the same generating function $\text{part}_{\ell, k}$ since the ``new'' and ``old'' elements ${a}\underline{a}_n$ have the same weight $n\delta$, and the inequality (\ref{E:lin. indep. inequality}) is obtained  by proving linear independence of this set of vectors. We prove linear independence in  Section \ref{S: proof} by establishing a connection with the combinatorial basis of Feigin-Stoyanovsky's type subspace $W(k\Lambda_0)$ of $C_{2\ell}^{(1)}$-module $L(k\Lambda_0)$ constructed in \cite{BPT}.
}\end{remark}

%%%%%%%%%%%%%%%%%%%%%%%%%%%%
\subsection{Combinatorial bases of Feigin-Stoyanovsky type subspaces for $C_{2\ell}^{(1)}$}\label{Subsection: combinatorial bases of FS subspaces}

For $\mathfrak g$ of the type $C_\ell$ there is (up to conjugation) only one
$\mathbb Z$-grading 
$$
\mathfrak{g} = \mathfrak{g}_{-1} + \mathfrak{g}_0 + \mathfrak{g}_1
$$
with $\mathfrak{h}\subset \mathfrak{g}_0$ and we choose the basis 
$$
B_\ell^1=\{ij\mid 1\leq i\leq j\leq\ell\}
$$
for $\mathfrak{g}_1$ organized in a triangular scheme. 
For $\ell=3$ we have the elements of $B_3^1$ in the upper shaded triangle:

	\begin{center}
	\begin{tikzpicture}[scale=.5]
		\begin{scope}				
			\foreach \x in {1,...,3}{
							\fill[black!10] (.5,{\x+2.5}) rectangle ({4-\x+.5},{\x+3.5});
				}
			\foreach \y in {1,...,3}{
				\foreach \x in {\y,...,3}{
					\node at ({\y},{7-\x}){\y\x} ;
				}
				\foreach \x in {3,...,1}{
					\node at ({\y},{\x}){\y \underline{\x}} ;
				}
			}
			\foreach \y in {3,...,1}{
				\foreach \x in {\y,...,1}{
					\node at ({7-\y},{\x}){\underline{\y}\underline{\x}} ;
				}
			}
		\end{scope}			
	\end{tikzpicture}
\end{center}
From now on, for fixed $\ell\geq 2$, we consider $B_{2\ell}^1$. For $\ell=3$ we have a basis $B_{6}^1$ of $\mathfrak{g}_1$ for $\mathfrak g$ of the type $C_{6}$:

	\begin{center}
	\begin{tikzpicture}[scale=.5]
		\begin{scope}				
			\foreach \y in {1,...,6}{
				\foreach \x in {\y,...,6}{
					\node at ({\y},{7-\x}){\y\x} ;
				}
			}
		\end{scope}			
	\end{tikzpicture}
\end{center}

In order to describe the combinatorial monomial basis of Feigin-Stoyanovsky's subspace $W(k\Lambda_0)$ for $C_{2\ell}^{(1)}$ we need a subset ${\bar B}_{2\ell}^{1,<0}\subset\bar{B}_{2\ell}$,
$$
{\bar B}_{2\ell}^{1,<0}=
{B}_{2\ell}^1\otimes t^{-1} \ {\cup} \ {{B}_{2\ell}^1\otimes t^{-2}} \  {\cup}  \  {B}_{2\ell}^1\otimes t^{-3} \ {\cup} \dots ,
$$
written as the array composed of a sequence of triangles. For $\ell=2$ we have the array $\bar{B}_4^{1,<0}$

	\begin{center}

	\begin{tikzpicture}[xscale=.7, yscale =.5]
		\begin{scope}				
	\foreach \x in {1,...,4}{
		\foreach \y in {\x,...,4}{
			\node at ({\y},{5-\x}){\x\y$\scriptstyle _{-1}$} ;
		}
	}
\end{scope}			
\begin{scope} [xshift=4cm]				
				\foreach \x in {1,...,4}{
	\foreach \y in {\x,...,4}{
		\fill[black!10] ({\x-.5},{\y-\x+.5}) rectangle ({\x+.5},{\y-\x+1.5});
	}
}
	\foreach \x in {1,...,4}{
		\foreach \y in {\x,...,4}{
			\node at ({\x},{5-\y}){\x\y$\scriptstyle _{-2}$} ;
		}
	}
\end{scope}			
		\fill[black!10] (8.5,.5) -- ++(1,0)--++(0,-1) -- ++(1,0)--++(0,-1) -- ++ (-2,0) -- cycle;
\node at (5,0){11$\scriptstyle _{-3}$} ;
\node at (6,0){12$\scriptstyle _{-3}$} ;
\node at (7,0){13$\scriptstyle _{-3}$} ;
\node at (8,0){14$\scriptstyle _{-3}$} ;
\node at (9,0){11$\scriptstyle _{-4}$} ;
		\node at (6,-1){$\ddots$} ;
		\node at (7,-1){$\ddots$} ;
		\node at (8,-1){$\ddots$} ;
		\node at (9,-1){$\ddots$} ;
		\node at (10,-1){$\ddots$} ;
	\end{tikzpicture}
\end{center}
where $ij_{-n}=ij(-n)=ij\otimes t^{-n}$. We shall write this array of root vectors $\bar{B}_{4}^{1,<0}$ rotated for $\pi/4$, denoting only the position $\circ$ of an element $x\in \bar{B}_{4}^{1,<0}$:

	\begin{center}
	\begin{tikzpicture}[xscale=.7, yscale =.7, rotate=45]
		\begin{scope}				
			\foreach \y in {1,...,3}{
				\foreach \x in {1,...,5}{
					\node at ({\x+\y-1},{1-\y}){$\circ$};
				}
			}
			\foreach \x in {1,...,4}{
				\node at ({\x+4-1},{1-4}){$\circ$};
			}
			\foreach \x in {1,...,2}{
				\node at ({\x+5-1},{1-5}){$\circ$};
			}
			\node at (7,-4) {$\cdots$};
		\end{scope}			
	\end{tikzpicture}
\end{center}
The array $\bar{B}_{2\ell}^{1,<0}$ rotated for $\pi/4$ has $2\ell+1$ rows.
A downward path $\mathcal Z$ in the array $\bar{B}_{2\ell}^{1,<0}$ is defined as before,
and through each point in the first row there is $2^{2\ell}$ downward paths.

Like before, a monomial
$$
\pi=\prod_{b(j)\in \bar{B}_{2\ell}^{1,<0}}b(j)\sp{m_{b(j)}}\in S(\tilde{\mathfrak{g}}_1)
$$
can be interpreted as a colored partition $\pi$ with colors $b\in B_{2\ell}^1$. 
Formally the definition of $k\Lambda_0$-admissible colored partitions $\pi$ on the array $\bar{B}_{2\ell}^{1,<0}$ is the same as before.

Note that the Lie algebra $\mathfrak{g}_1$ is commutative, so the enveloping algebra is commutative, that is  $U(\mathfrak{g}_1)\cong S(\mathfrak{g}_1)$. It is proved in \cite{BPT} that the set of monomial vectors $u(\pi) v_{k\Lambda_0}$ with $k\Lambda_0$-admissible $\pi$ is a basis set of the Feigin-Stoyanovsky's subspace $W(k\Lambda_0)$.

%%%%%%%%%%%%%%%%
\section{The proof of linear independence} \label{S: proof}

Let $\mathfrak g'$ be a simple Lie algebra of the type $C_{2\ell}$.
For any subset $\{i_1,\dots,i_\ell\}\subset \{1,\dots,2\ell\}$, a subalgebra  generated by root vectors
$$\{ab \ | \ a,b\in \{i_1,\dots,i_\ell,\underline{i_\ell},\dots,\underline{i_1}\},a\succeq b  \}$$
is a Lie algebra of the type $C_\ell$ (recall the change of basis from Remark \ref{R: change of basis}). This induces inclusions of the corresponding affine Lie algebra $C_\ell^{(1)}$ and its vacuum standard module
$$L_{C_\ell^{(1)}}(k\Lambda_0)\subset L_{C_{2\ell}^{(1)}}(k\Lambda_0).$$

In the remaining considerations, we fix a subset $\{1,\dots,\ell\}\subset \{1,\dots,2\ell\}$ and the corresponding subalgebra $\mathfrak g\subset \mathfrak g'$; for $\ell=3$ we see on Figure 1 the shaded basis $B_{3}\subset B_{6}$ and the boxed basis $B_{6}^{1}\subset B_{6}$. We consider the basis $\bar{B}_\ell^{<0}$ as a subset of $\bar{B}_{2\ell}^{<0}$ and similarly for monomials in $\bar{B}_\ell^{<0}$ and $\bar{B}_{2\ell}^{<0}$ and the corresponding monomial vectors. 
%%%%%%
\begin{figure}
	\caption{Bases $B_{6}$, shaded $B_3$ and boxed $B_{6}^{1}$}
	\begin{center}
		\begin{tikzpicture}[scale=.5]
			\begin{scope}				
				\foreach \y in {1,...,3}{
					\foreach \x in {\y,...,3}{
						\fill[black!10] ({\y-.5},{13-\x-.5}) rectangle ({\y+.5},{13-\x+.5});
					}
					\foreach \x in {3,...,1}{
						\fill[black!10] ({\y-.5},{\x-.5}) rectangle ({\y+.5},{\x+.5});
					}
				}
				\foreach \y in {3,...,1}{
					\foreach \x in {\y,...,1}{
						\fill[black!10] ({13-\y-.5},{\x-.5}) rectangle ({13-\y+.5},{\x+.5});
					}
				}
				\foreach \y in {1,...,6}{
					\foreach \x in {\y,...,6}{
						\node at ({\y},{13-\x}){\y\x} ;
					}
					\draw (\y-.5,13.5-\y) -- ++(1,0) -- ++(0,-1);
					\foreach \x in {6,...,1}{
						\node at ({\y},{\x}){\y \underline{\x}} ;
					}
				}
				\foreach \y in {6,...,1}{
					\foreach \x in {\y,...,1}{
						\node at ({13-\y},{\x}){\underline{\y}\underline{\x}} ;
					}
				}
				\draw	(.5,12.5) -- ++(0,-6) -- ++(6,0);
			\end{scope}			
		\end{tikzpicture}
	\end{center}
\end{figure}
%%%%%%%%
The basic idea for the proof of linear independence is to shift colors of monomials in $\bar{B}_\ell^{<0}$ to $\bar{B}_{2\ell}^{1,<0}$ (see Figure 2). The same combinatorial description of the spanning set of the vacuum standard module $L_{C_\ell^{(1)}}(k\Lambda_0)$ from \cite{PS2} and  of the basis of Feigin-Stoyanovsky's subspace $W_{C_{2\ell}^{(1)}}(k\Lambda_0)$ from \cite{BPT} gives  linear independence of the first set. 

To shift colors from ${B}_\ell$ to ${B}_{2\ell}^{1}$, or from $\bar{B}_\ell$ to $\bar{B}_{2\ell}^{1}$, we use the adjoint action $T_{\underline \ell}, \dots, T_{\underline 1}$ by suitable elements of $\mathfrak{g}'$. Note that for $b\in B_{2\ell}$ and $n<0$ we have
\begin{equation}\label{E: T(bn)=(Tb)n}
T_{\underline a}\left(b_n\right)=\left(T_{\underline a}b\right)_n.
\end{equation}
%%%%%%%%
\begin{figure}
	\caption{Shifting colors from $B_3$ to $B_{6}^{1}$}
	\begin{center}
		\begin{tikzpicture}[scale=.2]
			\begin{scope}
				\foreach \y in {1,...,3}{
					\foreach \x in {\y,...,3}{
						\fill[black!10] ({\y-.5},{13-\x-.5}) rectangle ({\y+.5},{13-\x+.5});
					}
					\foreach \x in {3,...,1}{
						\fill[black!10] ({\y-.5},{\x-.5}) rectangle ({\y+.5},{\x+.5});
					}
				}
				\foreach \y in {3,...,1}{
					\foreach \x in {\y,...,1}{
						\fill[black!10] ({13-\y-.5},{\x-.5}) rectangle ({13-\y+.5},{\x+.5});
					}
				}
				\foreach \y in {1,...,6}{
					\foreach \x in {\y,...,6}{
						\node at ({\y},{13-\x}){$\scriptstyle \circ$} ;
					}
					\draw (\y-.5,13.5-\y) -- ++(1,0) -- ++(0,-1);
					\foreach \x in {6,...,1}{
						\node at ({\y},{\x}){$\scriptstyle \circ$} ;
					}
				}
				\foreach \y in {6,...,1}{
					\foreach \x in {\y,...,1}{
						\node at ({13-\y},{\x}){$\scriptstyle \circ$} ;
					}
				}
				\draw	(.5,12.5) -- ++(0,-6) -- ++(6,0);
		\draw[->,decorate,decoration={snake,amplitude=.4mm,segment length=2mm,post length=1mm}] (8.5,6.5) -- ++(5,0);
			\end{scope}
		\begin{scope}[xshift=14cm]
		\foreach \y in {1,...,4}{
			\foreach \x in {\y,...,4}{
				\fill[black!10] ({\y-.5},{13-\x-.5}) rectangle ({\y+.5},{13-\x+.5});
			}
			\foreach \x in {2,...,1}{
				\fill[black!10] ({\y-.5},{\x-.5}) rectangle ({\y+.5},{\x+.5});
			}
		}
		\foreach \y in {2,...,1}{
			\foreach \x in {\y,...,1}{
				\fill[black!10] ({13-\y-.5},{\x-.5}) rectangle ({13-\y+.5},{\x+.5});
			}
		}
		\foreach \y in {1,...,6}{
			\foreach \x in {\y,...,6}{
				\node at ({\y},{13-\x}){$\scriptstyle \circ$} ;
			}
			\draw (\y-.5,13.5-\y) -- ++(1,0) -- ++(0,-1);
			\foreach \x in {6,...,1}{
				\node at ({\y},{\x}){$\scriptstyle \circ$} ;
			}
		}
		\foreach \y in {6,...,1}{
			\foreach \x in {\y,...,1}{
				\node at ({13-\y},{\x}){$\scriptstyle \circ$} ;
			}
		}
		\draw	(.5,12.5) -- ++(0,-6) -- ++(6,0);
		\draw[->,decorate,decoration={snake,amplitude=.4mm,segment length=2mm,post length=1mm}] (8.5,6.5) -- ++(5,0);
	\end{scope}
		\begin{scope}[xshift=28cm]
	\foreach \y in {1,...,5}{
		\foreach \x in {\y,...,5}{
			\fill[black!10] ({\y-.5},{13-\x-.5}) rectangle ({\y+.5},{13-\x+.5});
		}
		\foreach \x in {1,...,1}{
			\fill[black!10] ({\y-.5},{\x-.5}) rectangle ({\y+.5},{\x+.5});
		}
	}
	\foreach \y in {1,...,1}{
		\foreach \x in {1,...,1}{
			\fill[black!10] ({13-\y-.5},{\x-.5}) rectangle ({13-\y+.5},{\x+.5});
		}
	}
	\foreach \y in {1,...,6}{
		\foreach \x in {\y,...,6}{
			\node at ({\y},{13-\x}){$\scriptstyle \circ$} ;
		}
		\draw (\y-.5,13.5-\y) --  ++(1,0) -- ++(0,-1);
		\foreach \x in {6,...,1}{
			\node at ({\y},{\x}){$\scriptstyle \circ$} ;
		}
	}
	\foreach \y in {6,...,1}{
		\foreach \x in {\y,...,1}{
			\node at ({13-\y},{\x}){$\scriptstyle \circ$} ;
		}
	}
	\draw	(.5,12.5) -- ++(0,-6) -- ++(6,0);
		\draw[->,decorate,decoration={snake,amplitude=.4mm,segment length=2mm,post length=1mm}] (8.5,6.5) -- ++(5,0);
\end{scope}
		\begin{scope}[xshift=42cm]
	\foreach \y in {1,...,6}{
		\foreach \x in {\y,...,6}{
			\fill[black!10] ({\y-.5},{13-\x-.5}) rectangle ({\y+.5},{13-\x+.5});
		}
	}
	\foreach \y in {1,...,6}{
		\foreach \x in {\y,...,6}{
			\node at ({\y},{13-\x}){$\scriptstyle \circ$} ;
		}
		\draw (\y-.5,13.5-\y) -- ++(1,0) -- ++(0,-1);
		\foreach \x in {6,...,1}{
			\node at ({\y},{\x}){$\scriptstyle \circ$} ;
		}
	}
	\foreach \y in {6,...,1}{
		\foreach \x in {\y,...,1}{
			\node at ({13-\y},{\x}){$\scriptstyle \circ$} ;
		}
	}
	\draw	(.5,12.5) -- ++(0,-6) -- ++(6,0);
\end{scope}
		\end{tikzpicture}
	\end{center}
\end{figure}

\begin{example}{\em 
For $\ell=3$ we first move the shaded elements of $B_3$ in $\underline 3$-row to $4$-row in $B_6^1$ with $T_{\underline 3}=\text{ad\,}(34)$ (see Figure 2):
$$
{\underline 3}{\underline 3}\mapsto 4{\underline 3},\quad
{\underline 3}{\underline 2}\mapsto 4{\underline 2},\quad
{\underline 3}{\underline 1}\mapsto 4{\underline 1}\quad\text{and}\quad
1{\underline 3}\mapsto 14,\quad 
2{\underline 3}\mapsto 24,\quad
3{\underline 3}\mapsto 34,\quad
4{\underline 3}\mapsto 44,
$$
and $T_{\underline 3}=\text{ad\,}(34)$ is zero on all other shaded elements together with newly shaded $4{\underline 2}$ and $4{\underline 1}$ (recall the change of basis of Cartan subalgebra from Remark \ref{R: change of basis}). Now we have shaded $4$-row in $B_6^1$ and last two rows with shaded $4\times2$ rectangle at the begining and $2\times2$ triangle at the end.
Next we move the shaded elements in $\underline 2$-row to $5$-row in $B_6^1$ with $T_{\underline 2}=\text{ad\,}(25)$:
$$
{\underline 2}{\underline 2}\mapsto 5{\underline 2},\quad
{\underline 2}{\underline 1}\mapsto 5{\underline 1}\quad\text{and}\quad
1{\underline 2}\mapsto 15,\quad 
2{\underline 2}\mapsto 25,\quad
3{\underline 2}\mapsto 35,\quad
4{\underline 2}\mapsto 45,\quad
5{\underline 2}\mapsto 55
$$
and $T_{\underline 2}=\text{ad\,}(25)$ is zero on all other shaded elements together with newly shaded $5{\underline 1}$. Now we have shaded $5$-row in $B_6^1$ and one last row with shaded $5\times1$ ``rectangle''  at the begining and $1\times1$ ``triangle'' at the end.
Finally we move the shaded elements in $\underline 1$-row to $6$-row with $T_{\underline 1}=\text{ad\,}(16)$:
$$
{\underline 1}{\underline 1}\mapsto 6{\underline 1}\quad\text{and}\quad
1{\underline 1}\mapsto 16,\quad 
2{\underline 1}\mapsto 26,\quad
3{\underline 1}\mapsto 36,\quad
4{\underline 1}\mapsto 46,\quad
5{\underline 1}\mapsto 56,\quad
6{\underline 1}\mapsto 66.
$$

However, there is a drawback in this procedure -- the role of elements ${3}{\underline 3}=h_3$, ${2}{\underline 2}=h_2$ and ${1}{\underline 1}=h_1$ in $C_3\subset C_6$ while shifting colors with $T_{\underline 3}=\text{ad\,}(34)$ or $T_{\underline 2}=\text{ad\,}(25)$. We have 
\begin{eqnarray*}
& & \hspace{-4ex}	T_{\underline 3}:
	\quad {3}{\underline 3}\mapsto \langle \epsilon_3, \epsilon_3+\epsilon_4\rangle 34=34, 
	\quad {2}{\underline 2}\mapsto \langle \epsilon_2-\epsilon_3, \epsilon_3+\epsilon_4\rangle 34=-34, \\
	& &\quad {1}{\underline 1}\mapsto \langle \epsilon_1-\epsilon_2, \epsilon_3+\epsilon_4\rangle 34=0, \\ 
	& &  \hspace{-4ex}	T_{\underline 2}:
\quad {3}{\underline 3}\mapsto \langle \epsilon_3, \epsilon_2+\epsilon_5\rangle 25=0, 
\quad {2}{\underline 2}\mapsto \langle \epsilon_2-\epsilon_3, \epsilon_2+\epsilon_5\rangle 25=25, \\
& & 
\quad {1}{\underline 1}\mapsto \langle \epsilon_1-\epsilon_2, \epsilon_2+\epsilon_5\rangle 25=-25,  
\end{eqnarray*}
%\begin{eqnarray*}
%	& & \hspace{-3ex} T_{\underline 3}:
%	\quad h_3 \mapsto \langle \epsilon_3-\epsilon_4, \epsilon_3+\epsilon_4\rangle 34=0, 
%	\quad h_2\mapsto \langle \epsilon_2-\epsilon_3, \epsilon_3+\epsilon_4\rangle 34=34, 
%	\quad h_1\mapsto \langle \epsilon_1-\epsilon_2, \epsilon_3+\epsilon_4\rangle 34=0, \\ 
%	& & \hspace{-3ex} T_{\underline 2}:
%	\quad h_3\mapsto \langle \epsilon_3-\epsilon_4, \epsilon_2+\epsilon_5\rangle 25=0, 
%	\quad h_2\mapsto \langle \epsilon_2-\epsilon_3, \epsilon_2+\epsilon_5\rangle 25=25, 
%	\quad h_1\mapsto \langle \epsilon_1-\epsilon_2, \epsilon_2+\epsilon_5\rangle 25=-25.  
%\end{eqnarray*}
which could be difficult to handle. For this reason in (\ref{E:change of basis B}) we introduce elements ${3}{\underline 3}=\epsilon_3$, ${2}{\underline 2}=\epsilon_2$ and ${1}{\underline 1}=\epsilon_1$ so that the action of $T_{\underline 3}$ and $T_{\underline 2}$ on ${3}{\underline 3}$, ${2}{\underline 2}$ and ${1}{\underline 1}$ is as described above:
\begin{eqnarray*}
	& &  \hspace{-4ex}	T_{\underline 3}:
	\quad {3}{\underline 3}\mapsto 34, 
	\quad {2}{\underline 2}\mapsto 0, 
	\quad {1}{\underline 1}\mapsto 0, \\ 
	& &  \hspace{-4ex}	T_{\underline 2}:
	\quad {3}{\underline 3}\mapsto 0, 
	\quad {2}{\underline 2}\mapsto 25,
	\quad {1}{\underline 1}\mapsto 0, \\
	& &  \hspace{-4ex}	T_{\underline 1}:
\quad {3}{\underline 3}\mapsto 0, 
\quad {2}{\underline 2}\mapsto 0,
\quad {1}{\underline 1}\mapsto 16.
\end{eqnarray*}
}\end{example}
%%%%%%%%%%%
In general, $B_{2\ell}$ is the triangle with vertices $11,1{\underline 1},{\underline 1}{\underline 1}$; $B_{2\ell}^1$ is the triangle with vertices $11,1(2\ell),(2\ell)(2\ell)$ -- we think of it as boxed elements; and $B_{\ell}$ is the union of two $\ell\times\ell$ triangles with vertices $11,1\ell,\ell\ell$ and ${\underline \ell}{\underline \ell},{\underline \ell}{\underline 1},{\underline 1}{\underline 1}$ and one $\ell\times\ell$ square with vertices 
$\ell{\underline \ell},1{\underline \ell},1{\underline 1},\ell{\underline 1}$
 -- we think of it as shaded elements.

For $1\leq a\leq \ell$ set
\begin{equation}\label{E:def Ta}
T_{\underline a} = \text{ad\,}(t_{\underline a})\,\colon \mathfrak g' \to \mathfrak g',\quad
t_{\underline a} =a (2\ell-a+1)\in B_{2\ell}\subset\mathfrak g' .
\end{equation}
We use inner derivations $T_{\underline a}$ to shift in steps colors in $B_{\ell}$ into colors in $B_{2\ell}^1$, row by row. 
First we apply $T_{\underline \ell}=\text{ad\,}(\ell(\ell+1))$ and move the vertical catheti of the lower triangle
$[{\underline \ell}{\underline \ell},{\underline \ell}{\underline 1}]$ onto the vertical segment
$[(\ell+1){\underline \ell},(\ell+1){\underline 1}]$, forming a new shaded $\ell\times(\ell+1)$ rectangle. Then we move with $T_{\underline \ell}=\text{ad\,}(\ell(\ell+1))$ the upper side of newly formed shaded rectangle
$[1{\underline \ell}, (l+1){\underline \ell}]$ into the $(\ell+1)$-row 
$[1(\ell+1), (\ell+1)(\ell+1)]$
of boxed triangle $B_{2\ell}^1$. The result is the shaded union of $(\ell+1)\times(\ell+1)$ triangle on the top, $(\ell+1)\times(\ell-1)$ rectangle at the right angle and $(\ell-1)\times(\ell-1)$ triangle on the right.

In $r<\ell$ steps we get the shaded union of $(\ell+r)\times(\ell+r)$ triangle on the top, $(\ell+r)\times(\ell-r)$ rectangle at the right angle and $(\ell-r)\times(\ell-r)$ triangle on the right, so we can proceed by applying  $T_{\underline {\ell-r}}=\text{ad\,}((\ell-r)(\ell+r+1))$.

Our intuitive considerations above we can rephrase with formulas, keeping in mind the change of basis from Remark \ref{R: change of basis}:
\begin{lemma}\label{L: action of Ta}
$$
\begin{aligned}
	T_{\underline a}\,\underline{a} \underline{b}_n& \in  \mathbb C^\times(2\ell-a+1) \underline{b}_n,\\  
\	T_{\underline a}\, c \underline{a}_n & \in  \mathbb C^\times  c (2\ell-a+1)_n,  
\end{aligned}
$$
for $1\leq b\leq a \leq \ell$, $1\leq c \leq 2\ell-a+1 $. In the first case, there is a shift from the $\underline{a}$-column to the $(2\ell-a+1)$-column, while in the second case, there is a shift from the $\underline{a}$-row to the $(2\ell-a+1)$-row. Moreover
$$
\begin{aligned}
	T_{\underline a}^2\, c \underline{a}_n & =  0,  \\
\	T_{\underline a}^2\, \underline{a} \underline{b}_n & = 0 \ \text{for}\  b<a, \\
\	T_{\underline a}^3\, \underline{a} \underline{a}_n & =  0, \\
\   T_{\underline a}\,  ef_n & =  0  \  \text{for}\  e,f \notin \{\underline{a},\underline{2\ell-a+1}\}. 	
\end{aligned}
$$
\end{lemma}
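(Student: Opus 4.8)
The plan is to reduce every assertion to a finite computation inside $\mathfrak{g}'$ and then read it off from the additivity of weights under root-vector brackets. Write $a'=2\ell-a+1$, so that $t_{\underline a}=aa'$ is the root vector for $\beta=\epsilon_a+\epsilon_{a'}$, and note $a\leq\ell<a'$, hence $a\neq a'$. Since $t_{\underline a}\in\mathfrak g'=\mathfrak g'\otimes t^0$ and $n<0$, the bracket $[t_{\underline a},b_n]=[t_{\underline a},b]_n$ carries no central term; this is exactly \eqref{E: T(bn)=(Tb)n}, and it lets me drop the degree $n$ and work with $\mathrm{ad}(t_{\underline a})$ on the finite-dimensional $\mathfrak g'$. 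Throughout I would use the standard structure theory of a simple Lie algebra: for roots $\gamma,\delta$ with $\gamma+\delta\neq0$ one has $[x_\gamma,x_\delta]=N_{\gamma,\delta}x_{\gamma+\delta}$ with $N_{\gamma,\delta}\neq0$ precisely when $\gamma+\delta$ is a root; if $\gamma+\delta=0$ then $[x_\gamma,x_{-\gamma}]$ is a nonzero element of $\mathfrak h$; and $[h,x_\gamma]=\gamma(h)x_\gamma$.

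For the two shift identities I simply add $\beta$ to the weight of the input and identify the target basis vector. For $\underline a\underline b$ (weight $-\epsilon_a-\epsilon_b$, $b\leq a$) the sum $-\epsilon_a-\epsilon_b+\beta=\epsilon_{a'}-\epsilon_b$ is a root with root vector $a'\underline b=(2\ell-a+1)\underline b$, so the first identity holds with a nonzero coefficient; this realizes the column shift $\underline a\mapsto a'$. For $c\underline a$ with $c\neq a$ (weight $\epsilon_c-\epsilon_a$) the sum $\epsilon_c-\epsilon_a+\beta=\epsilon_c+\epsilon_{a'}$ is a root with root vector $ca'=c(2\ell-a+1)$ (using $c\leq a'$), giving the row shift $\underline a\mapsto a'$. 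The boundary case $c=a$, where $c\underline a=h_a$ lies in $\mathfrak h$, is handled by $[t_{\underline a},h_a]=-\beta(h_a)t_{\underline a}$ with $\beta(h_a)\neq0$ since $\beta$ has nonzero $\epsilon_a$-component; this is consistent with the stated shift as $ca'=aa'=t_{\underline a}$.

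For the nilpotency relations I would iterate the same weight bookkeeping and stop as soon as the accumulated weight $\gamma+m\beta$ is neither a root nor $0$, which forces the bracket to vanish. Concretely: $c\underline a\mapsto ca'$ and $ca'+\beta=\epsilon_a+\epsilon_c+2\epsilon_{a'}$ has three nonzero entries (as $a,c\neq a'$), so it is not a root and $T_{\underline a}^2\,c\underline a_n=0$; for $b<a$, $\underline a\underline b\mapsto a'\underline b$ and $a'\underline b+\beta=\epsilon_a-\epsilon_b+2\epsilon_{a'}$ is again not a root, giving $T_{\underline a}^2\,\underline a\underline b_n=0$; for $b=a$ one extra step survives, $\underline a\underline a\mapsto a'\underline a\mapsto a'a'$ (weights $-2\epsilon_a\to\epsilon_{a'}-\epsilon_a\to2\epsilon_{a'}$), but $2\epsilon_{a'}+\beta=\epsilon_a+3\epsilon_{a'}$ is not a root, so $T_{\underline a}^3\,\underline a\underline a_n=0$. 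Finally, for a general $bc$ with $b,c\notin\{\underline a,\underline{a'}\}$, a short case check on its weight $\pm\epsilon_b\pm\epsilon_c$ shows that either $\beta$ cannot be added at all, or it can be added once but a second addition produces a weight with three nonzero entries; in every case $T_{\underline a}^2\,bc_n=0$.

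The computations are individually routine; the one point requiring care — and the reason the relations take the precise shape stated — is the behaviour at weights where $\gamma+m\beta$ passes through $0$ and lands in the Cartan subalgebra rather than on a root. This is exactly what happens for inputs whose indices meet $\{\underline a,\underline{a'}\}$: the string $-\beta\to0\to\beta$ makes $\mathrm{ad}(t_{\underline a})^2$ nonzero on $x_{-\beta}=\underline{a'}\underline a$, so such inputs must be excluded in the last relation, and an analogous one-step survival through the Cartan is what distinguishes the cube in $T_{\underline a}^3\,\underline a\underline a_n=0$ from the squares elsewhere. Checking that the excluded index set is exactly $\{\underline a,\underline{a'}\}$, and that all surviving coefficients are genuinely nonzero (via $N_{\gamma,\delta}\neq0$ and $\beta(h_a)\neq0$), is the only genuinely case-by-case part of the argument.
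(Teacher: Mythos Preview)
Your argument is correct and is exactly the routine root-string computation the paper leaves implicit: the paper states this lemma without proof, offering it as the formal restatement of the row/column shifting illustrated in the preceding example, so your write-up supplies precisely the verification the reader is expected to carry out. One small point worth tightening: at the boundary $c=a$ the element $a\underline a=h_a$ you invoke must be read as the $a$-th simple coroot of the embedded $C_\ell$ (so for $a=\ell$ it is the coroot of $2\epsilon_\ell$, not of $\epsilon_\ell-\epsilon_{\ell+1}$ as in $C_{2\ell}$); with that reading $\beta(h_a)\neq0$ indeed holds, whereas the justification ``$\beta$ has nonzero $\epsilon_a$-component'' alone does not rule out a cancelling $\epsilon_{a'}$-contribution from $h_a$.
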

Since an inner derivation on $\widehat{\mathfrak g'}$ extends to a derivation on  $U(\widehat{\mathfrak g'})$, we have:
\begin{lemma}\label{L: action of powers Ta}
For a positive integer $m$, $1 \leq  a \leq \ell$, $1\leq b < a$ and $1\leq c \leq 2\ell-a+1 $, we have
$$
\begin{aligned}
&T_{\underline a}^{2m}\,\left(\underline{a} \underline{a}\right)^m 
\in  \mathbb C^\times \left((2\ell-a+1)(2\ell-a+1)\right)^m,\\  
&T_{\underline a}^{2m+1}\,\left(\underline{a} \underline{a}\right)^m=0,\\ 
&T_{\underline a}^{m}\,\left(\underline{a} \underline{b} \right)^m 
\in  \mathbb C^\times \left((2\ell-a+1)  \underline{b}\right)^m,\\  
&T_{\underline a}^{m+1}\,\left(\underline{a}  \underline{b} \right)^m=0\\
&T_{\underline a}^{m}\,\left(c \underline{a}\right)^m 
\in  \mathbb C^\times \left(c(2\ell-a+1)\right)^m,\\  
&T_{\underline a}^{m+1}\,\left(c \underline{a}\right)^m=0.
\end{aligned}
$$
\end{lemma}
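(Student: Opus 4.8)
My plan is to derive all six identities uniformly from Lemma~\ref{L: action of Ta} together with the fact, noted just before the statement, that $T_{\underline a}$ extends to a derivation of $U(\widehat{\mathfrak g'})$. Everything takes place in $U(\mathfrak g')$; the degree-$n$ lifts needed in the application follow verbatim by (\ref{E: T(bn)=(Tb)n}). Write $a^\ast=2\ell-a+1$. Read off as $T_{\underline a}$-strings, in which each arrow is $T_{\underline a}$ carrying a vector to a \emph{nonzero} scalar multiple of the next (and the last to $0$), Lemma~\ref{L: action of Ta} gives
\[
\underline a\,\underline a\ \longmapsto\ a^\ast\underline a\ \longmapsto\ a^\ast a^\ast\ \longmapsto\ 0\qquad(d=2),
\]
\[
\underline a\,\underline b\ \longmapsto\ a^\ast\underline b\ \longmapsto\ 0,\qquad c\,\underline a\ \longmapsto\ c a^\ast\ \longmapsto\ 0\qquad(d=1,\ b<a),
\]
so in every case the source $x$ satisfies $T_{\underline a}^{\,d}x\in\mathbb C^\times(\text{target})$ and $T_{\underline a}^{\,d+1}x=0$, with $d=2$ in the diagonal case and $d=1$ otherwise.

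The only tool is the generalized Leibniz rule for the derivation $T_{\underline a}$ in the associative algebra $U$,
\[
T_{\underline a}^{\,N}\!\left(x^{m}\right)=\sum_{N_1+\dots+N_m=N}\binom{N}{N_1,\dots,N_m}\,\bigl(T_{\underline a}^{\,N_1}x\bigr)\cdots\bigl(T_{\underline a}^{\,N_m}x\bigr),
\]
combined with $T_{\underline a}^{\,N_i}x=0$ for $N_i>d$. Since a composition of $N$ into $m$ parts each at most $d$ requires $N\le md$, every term vanishes when $N=md+1$; this yields $T_{\underline a}^{\,md+1}(x^{m})=0$, which is precisely the second, fourth and sixth identities. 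For $N=md$ the unique admissible composition is $N_1=\dots=N_m=d$, so a single term survives:
\[
T_{\underline a}^{\,md}\!\left(x^{m}\right)=\binom{md}{d,\dots,d}\bigl(T_{\underline a}^{\,d}x\bigr)^{m}=\frac{(md)!}{(d!)^{m}}\,\kappa^{m}\,(\text{target})^{m}\in\mathbb C^{\times}(\text{target})^{m},
\]
where $\kappa\in\mathbb C^\times$ is the scalar with $T_{\underline a}^{\,d}x=\kappa\,(\text{target})$; this is the first, third and fifth identities. Note that the surviving top term is literally an $m$-th power of the single vector $T_{\underline a}^{\,d}x$, so no commutativity or reordering is needed to evaluate it.

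The argument is bookkeeping; its substance lies entirely in the passage from Lemma~\ref{L: action of Ta} to the strings above—choosing the correct string length $d$ and, in particular, recognizing the length-three diagonal string $\underline a\,\underline a\mapsto a^\ast\underline a\mapsto a^\ast a^\ast\mapsto 0$, which forces the exponent $2m$ rather than $m$ in the first two identities, together with the nonvanishing of each scalar $\kappa$. The pigeonhole isolating the composition $N_1=\dots=N_m=d$ at $N=md$ is elementary, so no real obstacle remains once the strings are in hand.
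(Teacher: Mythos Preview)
Your proof is correct and follows essentially the same approach as the paper: both use Lemma~\ref{L: action of Ta} on a single factor together with the derivation property of $T_{\underline a}$ on $U(\widehat{\mathfrak g'})$, and then a pigeonhole-type argument to isolate the unique surviving distribution of derivations over the $m$ factors. The only difference is presentational: you write out the multinomial Leibniz expansion and the explicit coefficient $\tfrac{(md)!}{(d!)^m}\kappa^m$, whereas the paper phrases the same count informally as ``act twice on each factor; more than twice kills it; pigeonhole.''
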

\begin{proof}
By Lemma \ref{L: action of Ta} we have
$$
T_{\underline a}^{2}\,\left(\underline{a} \underline{a}\right)
\in  \mathbb C^\times (2\ell-a+1)(2\ell-a+1)\quad\text{and}\quad T_{\underline a}^{3}\,\left(\underline{a} \underline{a}\right)=0.
$$
The monomial $\left(\underline{a} \underline{a}\right)^m$ has $m$ factors $\underline{a} \underline{a}$, and if we act with the derivation $T_{\underline a}$ on each of this factors twice, we get something proportional to $\left((2\ell-a+1)(2\ell-a+1)\right)^m$, and if we act on some factor more than two times we get $0$. So the first two statements follow from the pigeonhole principle. For other four statements we argue similarly.
\end{proof}

For a monomial $\pi$ with variables in $\bar{B}_\ell^{<0}$ given by (\ref{E:colored partition pi}), and $1\leq a \leq \ell$, define
$$
\begin{gathered}
	m_{\underline{a}}(\pi) =  \sum_{b=1}^\ell \sum_{n<0} m_{b\underline{a}}(n)   +  \sum_{b=1}^{a-1} \sum_{n<0} m_{\underline{a}\underline{b}}(n) + 2 \sum_{n<0} m_{\underline{a} \underline{a}}(n) +  \sum_{b=a+1}^\ell \sum_{n<0} m_{\underline{b}\underline{a}}(n),\\
	M(\pi)  =  \sum_{a=1}^\ell m_{\underline{a}}(\pi).
\end{gathered}
$$
For $x\in\mathfrak g'\subset \widehat{\mathfrak g'}$ we have the inner derivation $T=\text{ad}\, x$ on  $U(\widehat{\mathfrak g'})$. Since $xv_{k\Lambda_0}=0$, for $u\in U(\widehat{\mathfrak g'})$ we have
$x(uv_{k\Lambda_0})=[x,u]v_{k\Lambda_0}=(Tu)v_{k\Lambda_0}$.
In particular, for $T_{\underline a}$ and $t_{\underline a}$ defined by (\ref{E:def Ta}) we have
\begin{equation*}%\label{E: t(uv)=(Tu)v}
t_{\underline a}(uv_{k\Lambda_0})=(T_{\underline a}u)v_{k\Lambda_0}.
\end{equation*}
Let
$$
\begin{aligned}
t(\pi)&=t_{\underline 1}^{m_{\underline{1}}(\pi)}t_{\underline 2} ^{m_{\underline{2}}(\pi)} \cdots t_{\underline \ell}^{m_{\underline{\ell}}(\pi)}\\
&=1 (2\ell)^{m_{\underline{1}}(\pi)} 2 (2\ell-1)^{m_{\underline{2}}(\pi)} \cdots \ell (\ell+1)^{m_{\underline{\ell}}(\pi)},\\
T(\pi)&=T_{\underline 1}^{m_{\underline{1}}(\pi)}T_{\underline 2} ^{m_{\underline{2}}(\pi)} \cdots T_{\underline \ell}^{m_{\underline{\ell}}(\pi)}.
\end{aligned}
$$
\begin{lemma}\label{L: T(pi)u(pi)=w(pi)} For a monomial $\pi$ given by (\ref{E:colored partition pi}) we have
$$
t(\pi) \left(u(\pi)v_{k\Lambda_0}\right)= \left(T(\pi)u(\pi)\right)v_{k\Lambda_0}
=w(\pi')v_{k\Lambda_0},
$$
where $w(\pi')$ is a monomial in $\bar{B}^{1,<0}_{2\ell}\subset \bar{B}^{<0}_{2\ell}$ such that
$$
\begin{aligned}
	m_{ab(n)}(\pi') & = 	m_{ab(n)}(\pi), \\
	m_{a (2\ell-b+1)(n)}(\pi') & = m_{a\underline{b}(n)}(\pi),\\
	m_{(2\ell-a+1) (2\ell-b+1)(n)}(\pi') & = m_{\underline{a}\underline{b}(n)}(\pi),\\
	m_{a\underline{b}(n)}(\pi') & = 	0, \\
	m_{\underline{a}\underline{b}(n)}(\pi') & =  	0,
\end{aligned}
$$
for $1\leq a,b \leq \ell$.
\end{lemma}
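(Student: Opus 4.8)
The plan is to show that applying the composite operator $t(\pi)$ to the monomial vector $u(\pi)v_{k\Lambda_0}$ sends each color $b\in B_\ell$ to its image in $B_{2\ell}^1$ exactly as recorded in the five displayed formulas, and that the result is a nonzero scalar multiple of a genuine monomial vector $w(\pi')v_{k\Lambda_0}$. The first identity $t(\pi)(u(\pi)v_{k\Lambda_0})=(T(\pi)u(\pi))v_{k\Lambda_0}$ is immediate: since each $t_{\underline a}=a(2\ell-a+1)\in\mathfrak g'\subset\widehat{\mathfrak g'}$ annihilates the highest weight vector, the displayed relation $t_{\underline a}(uv_{k\Lambda_0})=(T_{\underline a}u)v_{k\Lambda_0}$ holds, and one iterates it down the product defining $t(\pi)$ and $T(\pi)$. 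So the substance of the lemma is the computation of $T(\pi)u(\pi)$ inside $U(\widehat{\mathfrak g'})$.

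First I would record what $T(\pi)$ must do on a single color. The key point is that the operators $T_{\underline 1},\dots,T_{\underline\ell}$ act on \emph{disjoint} sets of colors in the sense that matters here: by Lemma~\ref{L: action of Ta}, $T_{\underline a}$ only moves variables living in the $\underline a$-row or $\underline a$-column (i.e.\ with a lower index equal to $\underline a$), and by the vanishing statements $T_{\underline a}^2$ kills every color not supported on $\{\underline a,\underline{2\ell-a+1}\}$ (with $T_{\underline a}^3$ killing the single diagonal color $\underline a\,\underline a$). The exponents $m_{\underline a}(\pi)$ are defined precisely so that the total number of $\underline a$-labels carried by the factors of $u(\pi)$ equals $m_{\underline a}(\pi)$: each factor $b\underline a$ or $\underline b\underline a$ contributes one $\underline a$, and each diagonal $\underline a\,\underline a$ contributes two, which is exactly the weighting in the definition of $m_{\underline a}(\pi)$.

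Next I would apply $T_{\underline a}^{m_{\underline a}(\pi)}$ to $u(\pi)$, one index $\underline a$ at a time, using Lemma~\ref{L: action of powers Ta}. Since $T_{\underline a}$ is a derivation of $U(\widehat{\mathfrak g'})$, acting on the product $u(\pi)$ distributes over its factors; by the pigeonhole argument already established in the proof of Lemma~\ref{L: action of powers Ta}, the only surviving term of $T_{\underline a}^{m_{\underline a}(\pi)}u(\pi)$ is the one in which each $\underline a$-label is hit the exact number of times needed to convert it (twice for a diagonal $\underline a\,\underline a$, once for every other $\underline a$-supported factor), and everything else is annihilated. This produces a nonzero scalar times the monomial in which every $\underline a\to(2\ell-a+1)$, realizing the three shift formulas $m_{a(2\ell-b+1)}=m_{a\underline b}$, $m_{(2\ell-a+1)(2\ell-b+1)}=m_{\underline a\underline b}$, and leaving the boxed colors $ab$ ($1\le a,b\le\ell$) untouched as $m_{ab}(\pi')=m_{ab}(\pi)$. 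I would then note that the operators $T_{\underline a}$ for distinct $a$ commute up to the order in which factors are relabeled, and—crucially—that once $T_{\underline a}$ has converted the $\underline a$-labels to $(2\ell-a+1)$-labels, a \emph{later} operator $T_{\underline b}$ ($b\ne a$) sees no $\underline b$-labels among the already-converted factors, so the shifts do not interfere; after all $\ell$ steps no negative ($\underline{\phantom a}$) lower index survives, giving $m_{a\underline b}(\pi')=m_{\underline a\underline b}(\pi')=0$.

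The main obstacle I anticipate is bookkeeping the \emph{order} of operations and verifying non-interference cleanly. One must check that applying $T_{\underline 1}^{m_{\underline 1}}\cdots T_{\underline\ell}^{m_{\underline\ell}}$ in the prescribed order never causes one shift to feed a color into the support of a still-to-be-applied operator, and that the surviving monomial is indeed an element of $\bar B_{2\ell}^{1,<0}$ rather than straying outside the boxed triangle; here the inductive ``row by row'' geometric description preceding the lemma (each $T_{\underline a}=\mathrm{ad}((\ell-r)(\ell+r+1))$ moving the current boundary into the next row of $B_{2\ell}^1$) is exactly what guarantees the image lands in $B_{2\ell}^1$. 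Once non-interference is confirmed and the scalars are tracked as lying in $\mathbb C^\times$ via Lemma~\ref{L: action of powers Ta}, the five exponent identities follow termwise and the proof is complete.
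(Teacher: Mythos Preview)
Your proposal follows the same route as the paper: pass from $t(\pi)$ to $T(\pi)$ via $t_{\underline a}v_{k\Lambda_0}=0$, then apply $T_{\underline\ell}^{m_{\underline\ell}},\dots,T_{\underline 1}^{m_{\underline 1}}$ in that order, using Leibniz together with Lemmas~\ref{L: action of Ta} and~\ref{L: action of powers Ta} and the pigeonhole principle to isolate the unique surviving summand. The paper writes this out only for $a=\ell$, splitting $m_{\underline\ell}=m'+m''+m'''$ into the row, diagonal, and column contributions, and then says ``argue similarly'' for smaller $a$; your account is the same argument.

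One sentence in your final paragraph is wrong as stated and, if used, would break the count. It is \emph{not} true that after $T_{\underline a}$ acts a later $T_{\underline b}$ (so $b<a$) ``sees no $\underline b$-labels among the already-converted factors'': the factor $\underline a\,\underline b$ is sent by $T_{\underline a}$ to $(2\ell-a+1)\underline b$, which still carries a $\underline b$-label that $T_{\underline b}$ must hit --- and this hit is already budgeted in $m_{\underline b}(\pi)$ through the summand $\sum_{c>b} m_{\underline c\,\underline b}$. The correct non-interference statement (which you also need for the image to stay in $B_{2\ell}^1$) is that $T_{\underline a}$ neither creates nor destroys $\underline b$-labels for $b\ne a$, and that the new index $2\ell-a+1$ is never in $\{b,\underline b,2\ell-b+1,\underline{2\ell-b+1}\}$, so subsequent $T_{\underline b}$'s leave it alone. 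With that correction your argument is complete and matches the paper's.
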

\begin{proof}
Assume that the monomial $u(\pi)$ defined by (\ref{E:monomial u(pi)}) has at least one factor $b(j)^{m_{b(j)}}$, $m_{b(j)}>0$, with color $b$ from the $\underline{\ell}$-row of $B_\ell$.
To shift colors in the monomial $u(\pi)$, first act with $T_{\underline \ell}^{m_{\underline{\ell}}(\pi)}=\ell (\ell+1)^{m_{\underline{\ell}}(\pi)}$. 
This shifts colors from $\underline{\ell}$-column to $(\ell+1)$-column, and from  $\underline{\ell}$-row to $(\ell+1)$-row. Monomial may have factors 
$$
1\underline{\ell}(n)^{m_{1\underline{\ell}(n)}}, 
2\underline{\ell}(n)^{m_{2\underline{\ell}(n)}}, 
\dots, \ell\underline{\ell}(n)^{m_{\ell\underline{\ell}(n)}}, 
\underline{\ell}\underline{\ell}(n)^{m_{\underline{\ell}\underline{\ell}(n)}}
$$
with colors in $\ell$-row for several values of $n<0$, but because of (\ref{E: T(bn)=(Tb)n})
$T_{\underline \ell}$ acts only on colors of these factors. Arguing as in the proof of Lemma \ref{L: action of powers Ta}, we see we need to act 
$$
m'=\sum_{n<0} \sum_{b=1}^{\ell}m_{b\underline{\ell}(n)}
$$
times with $T_{\underline \ell}$ to move all factors with colors $1\underline{\ell},\dots,\ell\underline{\ell}$ to factors with colors $1(\ell+1),\dots,\ell(\ell+1)$, and acting with $T_{\underline \ell}^{m'+1}$ will produce $0$. Schematically we write
$$
T_{\underline \ell}^{m'}\colon \ 
1\underline{\ell}(n)^{m_{1\underline{\ell}(n)}},  
\dots, \ell\underline{\ell}(n)^{m_{\ell\underline{\ell}(n)}},\forall n
\rightsquigarrow
1(\ell+1)(n)^{m_{1\underline{\ell}(n)}},  
\dots, \ell(\ell+1)(n)^{m_{\ell\underline{\ell}(n)}},\forall n.
$$
We also see that we need to act
$$
m''=2\sum_{n<0}m_{\underline{\ell}\underline{\ell}(n)}
$$
times with $T_{\underline \ell}$ to move all factors with color $\underline{\ell}\underline{\ell}$ to factors with colors  $(\ell+1)(\ell+1)$, and acting with $T_{\underline \ell}^{m''+1}$ will produce $0$. Schematically we write
$$
T_{\underline \ell}^{m''}\colon \ 
\underline{\ell}\underline{\ell}(n)^{m_{\underline{\ell}\underline{\ell}(n)}}, \forall n
\rightsquigarrow
(\ell+1)(\ell+1)(n)^{m_{1\underline{\ell}(n)}}, \forall n.
$$
So acting with $T_{\underline \ell}^{m'+m''}$ we move colors from $\underline{\ell}$-row to $(\ell+1)$row. But regardless whether there is or there is not a factor in $u(\pi)$ with color in $\underline{\ell}$-row of $B_\ell$, we must not forget to move the $\underline{\ell}$-column below $\underline{\ell}\underline{\ell}$ to $(\ell+1)$-column, for which we need to act
$$
m'''=\sum_{n<0}\sum_{b=1}^{\ell-1}  m_{\underline{\ell}\underline{b}}(n)
$$
times with $T_{\underline \ell}$. Schematically we write
$$
T_{\underline \ell}^{m'''}\colon \ 
\underline{\ell}\underline{b}(n)^{m_{\underline{\ell}\underline{b}(n)}}, b<\ell,
\forall n
\rightsquigarrow
(\ell+1)\underline{b}(n)^{m_{\underline{\ell}\underline{b}(n)}}, b<\ell,
\forall n.
$$

After that we proceed acting with $T_{\underline a}^{m_{\underline a}}$ for 
$a=\ell-1, \dots, 2,1$, and arguing in a similar way.
\end{proof}
\begin{proof}[The proof of linear independence] Let 
\begin{equation} \label{E:LinDep}
\sum_{\pi\in S} C_\pi u(\pi)v_{k\Lambda_0}=0,
\end{equation}
for some nonempty set $S$ of $k\Lambda_0$-admissible monomials in $\bar{B}_{\ell}^{<0}$.
Let $\displaystyle M=\max_{\pi \in S} M(\pi)$. Let $\pi_0\in S$ be such that $M(\pi_0)=M$. 
Acting by $t(\pi_0)$ on \eqref{E:LinDep} one gets
$$\sum_{\pi\in S} C_\pi \left(T(\pi_0)u(\pi)\right) v_{k\Lambda_0}=0.$$
By using the pigeonhole principle, and an argument similar to the proof of 
Lemma \ref{L: action of powers Ta}, we see that
$$
T(\pi_0)u(\pi)=
\left\{ \begin{array}{ll}
C_\pi' w(\pi'),\ C_\pi'\in \mathbb C^\times& \text{if}\ t(\pi)=t(\pi_0),\\
0 & \text{otherwise}.  		
\end{array} \right.
$$
Hence,
$$\sum_{\pi\in S'} C_\pi' C_\pi w(\pi')v_{k\Lambda_0}=0,
 $$ for some nonempty subset $S'\subset S$. This is a relation of linear dependence between monomials from $\bar{B}_{2\ell}^{1,<0}$. Since these are, again, $k\Lambda_0$-admissible, and hence linearly independent, it follows that $C_\pi=0$ for $\pi \in S'$. In particular, $C_{\pi_0}=0$. By induction one gets $C_\pi=0$ for all $\pi \in S$, i.e. the set of monomial vectors $u(\pi) v_{k\Lambda_0}$ with $k\Lambda_0$-admissible $\pi$ is linearly independent, and hence a basis of $L(k\Lambda_0)$.
\end{proof}

%%%%%%%%%%%%%%%%%%%%  
\section*{Acknowledgement} This work has been partly supported by the Croatian Science Foundation under the project IP-2022-10-9006.

%%%%%%%%%%%%%%%%%%%%%%%%%%%%%%%%%%%%%%%%

\end{document}